%% file: main.tex
\documentclass{article}
\input{preamble}

\title{Locally indicable groups outside \texorpdfstring{Brodski\u{\i}'s}{Brodskii's} quasivariety}
\author{Hannah Bo\ss}
\date{\today}

\begin{document}
	\maketitle

\begin{abstract}
    We provide a set of group properties that together obstruct membership in the quasivariety that Brodski\u{\i} constructed in his proof that torsion-free one-relator groups are locally indicable. There are examples of locally indicable groups with these properties, which implies that Brodski\u{\i}'s quasivariety is strictly contained in the quasivariety of all locally indicable groups. This disproves a conjecture of Howie.
\end{abstract}
	
\section{Introduction}
A group is called \emph{locally indicable} if each of its non-trivial finitely generated subgroups admits an epimorphism onto $\Z$.
In \cite{BrodskiiAnnouncement}, S. D. Brodski\u{\i} announced the first proof of the result that torsion-free one-relator groups are locally indicable. His proof was published four years later in \cite{Brodskii}. Independent proofs of the same result were given by J. Howie around the same time in \cite{Howie_OLIG} and later in \cite{Howie_ASPOATOB}.

The idea of Brodski\u{\i}'s algebraic proof is to show that a certain quasivariety of groups contains all free groups and is closed under taking one-relator products whose relator is not a proper power. A \emph{quasivariety} of groups is a non-empty class of groups that is closed under the operations of taking subgroups, arbitrary direct products and ultraproducts.
All classes of groups considered in this article are meant to be \emph{abstract} in the sense that membership of a group $G$ in a class $\mathcal{C}$, which we denote by $G \in \mathcal{C}$, only depends on the isomorphism type of $G$.
The quasivariety in question is $\frakAt$, the smallest non-trivial quasivariety of groups that is closed under group extensions. Because the class of all locally indicable groups is a non-trivial quasivariety closed under group extensions, the class $\frakAt$ is contained in the class of locally indicable groups.

In \cite[p.~62]{Howie_HTGO-RGT}, Howie suggests that the inclusion of these two quasivarieties could actually be an equality. In this note, we disprove this conjecture by providing a set of properties that jointly imply that a group is not contained in $\frakAt$ and giving examples of locally indicable groups satisfying these properties.

The author of this paper would like to thank her supervisor Giles Gardam for encouraging her to think about Howie's conjecture, Jim Howie for the helpful conversation on the history of the subject that we had in June 2026, and Corentin Bodart and Francesco Matucci for explanations on generalisations of Thompson's group $F$.

This work was conducted without using artificial intelligence and funded by the Deutsche Forschungsgemeinschaft (DFG, German Research Foundation) under Germany's Excellence Strategy - EXC-2047/2 - 390685813 and by the European Union (ERC, SATURN, 101076148).


\section{\texorpdfstring{Brodski\u{\i}'s}{Brodskii's} quasivariety\texorpdfstring{ $\frakAt$}{}}

The following definition is due to Brodski\u{\i}.
\begin{definition}[\cite{Brodskii}]
Let $\frakA$ be the quasivariety of all torsion-free abelian groups. For each ordinal $\alpha$, define the class $\frakA_\alpha$ as follows:
\begin{itemize}
	\item $\frakA_0 = \frakA$,
	\item if $\alpha = \beta +1$ is a successor ordinal, then $\frakA_\alpha$ consists of all group extensions of groups from $\frakA_\beta$ by groups from $\frakA_\beta$,
	\item if $\alpha$ is a limit ordinal, then $\frakA_\alpha$ is given by all groups that embed into an ultraproduct of groups from $\bigcup_{\beta < \alpha} \frakA_\beta$.
\end{itemize}
\end{definition}

This is an ascending family of quasivarieties of groups (see \cite[Proof of Lemma 5]{Brodskii}), which stabilises after countably many steps:
\begin{lemma}[{\cite[Lemma 5]{Brodskii}}] There exists a countable ordinal $\tilde{\alpha}$ such that $\frakA_{\tilde{\alpha}}$ is the smallest non-trivial quasivariety of groups that is closed under group extensions.
\end{lemma}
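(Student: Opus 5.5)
We must show two things. First, that the classes $\frakA_\alpha$ stabilise at some countable ordinal $\tilde\alpha$. Second, that $\frakA_{\tilde\alpha}$ is then the smallest non-trivial quasivariety closed under extensions. We take as given, from the proof of Lemma 5 in \cite{Brodskii}, that each $\frakA_\alpha$ is a quasivariety and that the family is ascending.

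\textbf{Minimality.} Let $\mathcal{Q}$ be any non-trivial quasivariety closed under extensions. A non-trivial quasivariety contains a non-trivial group $G$. Hence it contains all direct powers of $G$ and, by closure under subgroups and ultraproducts, all their subgroups. Taking an ultrapower of a non-trivial element, or an infinite direct product, yields an element of infinite order. Indeed, if $G$ has elements of unbounded order, take an element of a suitable ultrapower; if orders are bounded, this needs care. This suggests handling torsion more directly: we will show $\Z\in\mathcal{Q}$. Closure under extensions alone does not force this, but non-triviality combined with ultraproducts should. Concretely, we would embed $\Z$ into a nonprincipal ultrapower of $\prod_n G$ by choosing elements whose orders tend to infinity. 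If every element of every group in $\mathcal{Q}$ had bounded order, some finite cyclic group $C_p$ would lie in $\mathcal{Q}$. Then extensions give iterated wreath-like $p$-groups $C_p\wr\cdots$, whose exponents are unbounded, and their product's ultrapower contains $\Z$.

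Once $\Z\in\mathcal{Q}$, the rest follows. Torsion-free abelian groups embed into ultraproducts of finitely generated ones, namely $\Z^n$. So $\frakA_0\subseteq\mathcal{Q}$, and transfinite induction using closure under extensions and ultraproducts gives $\frakA_\alpha\subseteq\mathcal{Q}$ for all $\alpha$.

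\textbf{Stabilisation.} Conversely, we need some $\frakA_{\tilde\alpha}$ that is itself closed under extensions and is a non-trivial quasivariety. The union over all ordinals is closed under extensions, since an extension of $\frakA_\beta$ by $\frakA_\gamma$ lies in $\frakA_{\max+1}$. The issue is that this union might be a proper class. The key step is countability. A quasivariety is determined by the quasi-identities it satisfies, that is, by universal Horn sentences in the countable language of groups. There are only countably many such sentences, so the set of quasi-identities true in $\frakA_\alpha$ is a decreasing chain of subsets of a countable set. Such a chain can strictly decrease only countably many times. Hence there is a countable $\tilde\alpha$ with $\frakA_{\tilde\alpha}=\frakA_{\tilde\alpha+1}$, using that a quasivariety is determined by its theory. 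Then $\frakA_{\tilde\alpha}$ is closed under extensions, and it is non-trivial because it contains $\Z$.

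The main obstacle is the minimality step, namely forcing $\Z$ into an arbitrary non-trivial extension-closed quasivariety, especially when torsion is present. The stabilisation step is a clean cardinality argument.
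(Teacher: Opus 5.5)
Your proof is correct. The paper gives no argument of its own here: it cites Brodski\u{\i}'s Lemma~5, and also takes from there that the $\frakA_\alpha$ form an ascending chain of quasivarieties. So there is no route in the text to compare with, and your proposal is a self-contained proof.

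\emph{Minimality.} This is right, but the case analysis on bounded versus unbounded orders, the ultrapowers and the wreath products are more than you need. A non-trivial $G\in\mathcal{Q}$ contains either $\Z$ or some $C_p$. In the latter case, $C_{p^n}\in\mathcal{Q}$ for all $n$ by induction, since $C_{p^n}$ is an extension of $C_p$ by $C_{p^{n-1}}$. Then
\[
\Z\cong\langle(1,1,1,\dots)\rangle\le\prod_n C_{p^n}\in\mathcal{Q}.
\]

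\emph{Stabilisation.} The phrase ``a quasivariety is determined by its theory'' is doing real work and deserves an explicit citation. It is Mal'cev's theorem: a class containing the trivial group and closed under subgroups, direct products and ultraproducts is axiomatised by its quasi-identities. This is exactly what turns a strict inclusion $\frakA_\alpha\subsetneq\frakA_{\alpha+1}$ into a strict drop of quasi-identity theories. With it, send each strict step $\alpha$ to a quasi-identity in $\mathrm{Th}(\frakA_\alpha)\setminus\mathrm{Th}(\frakA_{\alpha+1})$. This map is injective into a countable set, so only countably many $\alpha$ are strict steps. Since $\omega_1$ is uncountable, some countable ordinal is not a strict step, and any such ordinal serves as $\tilde\alpha$.

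\emph{The proper-class remark.} This is a red herring. Each $\frakA_\alpha$ is already a proper class. Moreover, the union over all ordinals is itself a quasivariety closed under extensions, because any set-indexed family of its members lies in a single $\frakA_\alpha$. The only real content of the lemma is that the chain already stops at a countable stage.
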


Then by induction it follows that $\frakA_\alpha = \frakA_{\tilde{\alpha}}$ for all ordinals $\alpha$ with $\alpha > \tilde{\alpha}$. We denote $\frakA_{\tilde{\alpha}}$ by $\frakAt$ and call it \emph{Brodski\u{\i}'s quasivariety}.

Since the class of all locally indicable groups is a non-trivial quasivariety and is closed under group extensions, any group in $\frakAt$ is locally indicable. Howie asked in \cite{Howie_HTGO-RGT} whether the converse is true. Our main theorem is that this is not the case.

\begin{theorem} \label{Main theorem}
	There exist locally indicable groups that are not contained in Brodski\u{\i}'s quasivariety $\frakAt$.
\end{theorem}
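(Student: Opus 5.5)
The plan is to find a group property $\mathcal{P}$ that is inherited by every group in $\frakAt$, and then to exhibit a locally indicable group that lacks $\mathcal{P}$. The argument runs by transfinite induction on $\alpha$, proving that every group in $\frakA_\alpha$ has $\mathcal{P}$. For $\mathcal{P}$ to survive the induction it has to hold for torsion-free abelian groups, be preserved under extensions, and be preserved under embedding into ultraproducts. Properties that are defined "locally", that is, by a first-order or finitely-generated-subgroup condition, tend to pass through ultraproducts via {\L}o\'s's theorem. The property therefore has to be chosen with care, because local indicability itself passes every one of these tests. The extension step is the one that genuinely separates $\frakAt$ from the locally indicable groups.

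My first choice is an obstruction of the following kind. Let $G$ be a non-trivial finitely generated group whose derived subgroup is perfect (or simple), and which is not abelian modulo something small. Thompson's group $F$ and its generalisations are the natural candidates. Recall that $F$ is locally indicable, since it embeds in the group of orientation-preserving PL homeomorphisms of the interval, and such groups are locally indicable. Also $[F,F]$ is simple and is not finitely generated, while every finitely generated subgroup of $[F,F]$ is contained in a copy of $F$.

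The rough idea is this. Given an extension $1\to N\to G\to Q\to 1$ with $N,Q\in\frakA_\beta$, intersect a subgroup $H\le G$ with $N$. If $H$ has simple derived subgroup, then $[H,H]$ lies entirely in $N$ or embeds into $Q$. This lets us push $H$, or a subgroup built from it, down to level $\beta$. We then define a rank function $r(G)$, the least $\alpha$ with $G\in\frakA_\alpha$. The aim is to show that a group containing $F$-like configurations that are nested arbitrarily deeply cannot have any countable rank. A concrete way to arrange this is to build, by iterated wreath-like or self-similar constructions (for instance $F$ inside itself, or generalised Thompson groups $F_n$ or Bieri--Strebel groups), a locally indicable group $\Gamma$ in which every rank-reduction step returns an isomorphic copy of $\Gamma$ (or of a subgroup containing one). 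From this we conclude by well-foundedness of ordinals that $\Gamma\notin\frakA_\alpha$ for every $\alpha$.

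The key steps, in order, are as follows.
\begin{enumerate}
\item Isolate the properties. Candidates are: (a) $G$ is finitely generated, non-abelian, and every finitely generated subgroup is in some sense self-similar to $G$; (b) $[G,G]$ is simple; (c) $G$ embeds into its own derived subgroup or into proper subgroups in a controlled way.
\item Show that these properties are incompatible with a minimal successor level $\alpha=\beta+1$.
\item Show that they are incompatible with a minimal limit level.
\end{enumerate}

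I expect the limit step to be the main obstacle. An embedding into an ultraproduct of groups of lower rank gives no single lower-rank group containing $G$. What we can get, for a finitely presented $G$ (or a finite set of relations and non-relations), is that the finite configuration is realised in groups of rank less than $\alpha$. So I would first try to make the obstruction depend only on finitely many equations and inequations, which is why finite presentability of the example matters ($F$ is finitely presented). Then {\L}o\'s transfers a copy of a finitely presented group $G$ with the properties into some factor $\frakA_\beta$, contradicting the minimality of $\alpha$. Finally I would verify local indicability of the example via its action on the interval, and check that the simple-commutator argument goes through in the extension step.
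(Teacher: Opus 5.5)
\textbf{The limit step has a genuine gap.} Your successor step is fine once the hypotheses are fixed. Suppose $G'$ is simple and $G$ embeds in $G'$, and take $1\to N\to G\to Q\to 1$. The normal subgroup $N\cap G'$ of $G'$ is either $G'$ or trivial, so $G$ embeds into $N$ or into $Q$, which contradicts the minimality of $\alpha$. This is a small variant of the paper, which instead uses that every proper quotient of $G$ is abelian.

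You correctly flag the limit step as the crux, but you settle it by asserting that {\L}o\'s's theorem transfers a copy of the finitely presented group $G$ into some factor of the ultraproduct. That is false in general. An embedding involves infinitely many inequations, and {\L}o\'s only transfers finitely many at a time. For example, $\Z$ embeds into a non-principal ultraproduct of the groups $\Z/n\Z$ but into none of them. What finite presentability actually gives (this is the result from \cite{CSC} that the paper quotes) is only that $G$ is residually $\bigcup_{\beta<\alpha}\frakA_\beta$. That is, for any single $g\neq 1$ there are $\beta<\alpha$, $H\in\frakA_\beta$ and a homomorphism $\varphi\colon G\to H$ with $\varphi(g)\neq 1$.

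\textbf{The missing idea.} You need to promote this one surviving inequation to an embedding, and this uses the normal-subgroup structure of $G$ a second time. Take $g\in G'\setminus\{1\}$. Then $\ker\varphi\cap G'$ is a normal subgroup of the simple group $G'$ not containing $g$, hence trivial. So $G\hookrightarrow G'\hookrightarrow H$ and $G\in\frakA_\beta$, a contradiction. The paper phrases this as: $\varphi(G)$ is a non-abelian quotient of $G$, since $\varphi(g)\neq 1$ and $g\in G'$, so $\varphi$ is injective. Your remark about making the obstruction depend on finitely many equations and inequations points in this direction, but the argument itself is missing.

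\textbf{Fix the hypotheses.} You never commit to a precise list of properties. The rank-function discussion, the ``nested $F$-like configurations'' and the iterated wreath-like or self-similar constructions are unnecessary, and property (a) cannot be checked as stated. The following suffices: $G$ is finitely presented, $G'$ is simple (or, as in the paper, $G$ is non-abelian with all proper quotients abelian), and $G$ embeds in $G'$. Thompson's group $F$ has all of these properties and is locally indicable for the reason you give. With the limit step repaired, your outline proves the theorem.
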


To prove this, we first establish a criterion to deduce non-membership in Brodski\u{\i}'s quasivariety. There will be examples of locally indicable groups satisfying that criterion, which will imply Theorem \ref{Main theorem}.


\section{A criterion for lying outside \texorpdfstring{$\frakAt$}{Brodskii's quasivariety}}

\begin{theorem} \label{Theorem criterion for not in frakAt}
    Let $G$ be a group satisfying the following properties:
    \begin{enumerate}[label=(\arabic*)]
        \item \label{Item ass fp} $G$ is finitely presented.
        \item \label{Item ass not abelian} $G$ is not abelian.
        \item \label{Item ass quotients} Every proper quotient of $G$ is abelian.
        \item \label{Item ass embed in comm subgroup} $G$ is isomorphic to a subgroup of its own commutator subgroup $G^\prime$.
    \end{enumerate}
    Then $G$ is not contained in $\frakAt$.
\end{theorem}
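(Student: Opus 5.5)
We argue by transfinite induction on $\alpha$: we show that no group $G$ satisfying \ref{Item ass fp}--\ref{Item ass embed in comm subgroup} lies in $\frakA_\alpha$ for any ordinal $\alpha$. Since $\frakAt = \frakA_{\tilde{\alpha}}$, this gives the theorem. Suppose for a contradiction that some such $G$ lies in some $\frakA_\alpha$, and fix the least such $\alpha$.

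For $\alpha = 0$ there is nothing to do, because groups in $\frakA_0$ are abelian, which contradicts \ref{Item ass not abelian}.

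Suppose $\alpha = \beta+1$. Then there is a short exact sequence $1 \to N \to G \to Q \to 1$ with $N, Q \in \frakA_\beta$.
\begin{itemize}
\item If $N = 1$, then $G \cong Q \in \frakA_\beta$, contradicting minimality of $\alpha$.
\item If $N \neq 1$, then $Q$ is a proper quotient, so $Q$ is abelian by \ref{Item ass quotients}. Hence $G' \le N$.
\end{itemize}
By \ref{Item ass embed in comm subgroup}, $G$ embeds in $G'$ and therefore in $N$. Each $\frakA_\beta$ is closed under subgroups, being a quasivariety, so $G \in \frakA_\beta$. This again contradicts minimality.

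Now suppose $\alpha$ is a limit ordinal. Then $G$ embeds into an ultraproduct $\prod_{i\in I} H_i/\mathcal U$ with $H_i \in \frakA_{\beta_i}$ and $\beta_i<\alpha$. This is where finite presentability enters. Write $G=\langle x_1,\dots,x_n \mid r_1,\dots,r_m\rangle$ and pick a non-commuting pair $a,b$, available by \ref{Item ass not abelian}. Let the images of the $x_j$ be represented by tuples $(h_{j,i})_i$. The finitely many relations $r_k(h_{\cdot,i})=1$, together with $[a,b](h_{\cdot,i})\neq 1$, hold for $\mathcal U$-almost all $i$, so we may fix one such index $i$.
\begin{itemize}
\item By \ref{Item ass fp} we get a homomorphism $\varphi: G\to H_i$, $x_j\mapsto h_{j,i}$.
\item Its image is non-abelian, so $\ker\varphi$ is trivial, since otherwise $\varphi(G)$ would be a proper quotient and hence abelian by \ref{Item ass quotients}.
\end{itemize}
Thus $G$ embeds in $H_i\in\frakA_{\beta_i}$, so $G\in\frakA_{\beta_i}$ by closure under subgroups, contradicting minimality.

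The main obstacle is this limit step: it requires passing from an ultraproduct to a single factor. Finite presentability and the "just non-abelian" condition \ref{Item ass quotients} together make this work, since \L{}o\'s-type reasoning only transfers finitely many equations and one inequation. The other point to check is that each $\frakA_\beta$ is subgroup-closed, which the paper cites from Brodski\u{\i}'s proof of Lemma~5.
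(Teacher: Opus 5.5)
Your proof is correct and follows the paper's own argument. You take a minimal ordinal $\alpha$, handle the successor case using properties (3), (4) and the fact that each $\frakA_\beta$ is closed under subgroups, and in the limit case pass to a single factor of the ultraproduct and apply (3). The one difference is in the limit step: you carry out the finite-presentation/ultrafilter argument by hand, whereas the paper cites the Ceccherini-Silberstein--Coornaert result that a finitely presented group embedding into an ultraproduct of a subgroup-closed class is residually in that class, and then applies (3) to a surjection that does not kill a chosen nontrivial element of $G'$.
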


Property \ref{Item ass quotients} is for instance satisfied whenever the centre $\Zen (G)$ of $G$ is trivial and $G^\prime$ is simple. The examples that we will mention now have property \ref{Item ass quotients} for this reason.

Theorem \ref{Theorem criterion for not in frakAt} implies Theorem \ref{Main theorem} as follows.

\begin{proof}[Proof of Theorem \ref{Main theorem}]
    Thompson's group $F$ is locally indicable, in fact bi-orderable (see for example \cite[Theorem 4.11]{CannFloyParr_INORTG}). Moreover, $F$ satisfies the properties from Theorem \ref{Theorem criterion for not in frakAt}, which follows for instance from \cite[Theorem 3.4, (Proof of) Theorem 4.3, Theorem 4.1 and Lemma 4.4]{CannFloyParr_INORTG}. Hence, Theorem \ref{Theorem criterion for not in frakAt} implies that $F$ is a locally indicable group that is not contained in the quasivariety $\frakAt$.
\end{proof}

There are more examples of locally indicable groups satisfying the conditions of Theorem \ref{Theorem criterion for not in frakAt} among the generalisations of Thompson's group $F$ that are extensively studied in \cite{BierStreb} and denoted $G(I;A,P)$ there.

To prove Theorem \ref{Theorem criterion for not in frakAt}, we make use of a combination of two results from \cite{CSC}.

\begin{theorem}[{\cite[Theorem 7.2.5 and Corollary 7.1.21]{CSC}}] \label{Theorem locally embed implies residually} Let $\mathcal{C}$ be a class of groups which is closed under taking subgroups. Then any finitely presented group which embeds into an ultraproduct of groups from $\mathcal{C}$ is residually $\mathcal{C}$.
\end{theorem}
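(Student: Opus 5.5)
The plan is to prove this directly from the definition of an ultraproduct, without passing through the intermediate notion of local embeddability used in \cite{CSC}. Let $G$ be finitely presented, say $G = \langle S \mid R\rangle$ with $S$ and $R$ finite. Let $\varphi \colon G \to \prod_{i\in I} G_i/\mathcal{U}$ be an injective homomorphism, where each $G_i \in \mathcal{C}$ and $\mathcal{U}$ is an ultrafilter on $I$. Recall that \emph{residually $\mathcal{C}$} means: for every $g \neq 1$ in $G$ there is a homomorphism $\psi\colon G \to H$ with $H\in\mathcal{C}$ and $\psi(g)\neq 1$. So fix $g \neq 1$ and write $g = w(S)$ for some word $w$ in $S^{\pm 1}$.

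\textbf{Step 1: lift the generators.} For each $s \in S$, choose a representative $(x_{s,i})_{i\in I}$ of $\varphi(s)$. Multiplication in the ultraproduct is computed coordinatewise on representatives. Hence, for any word $u$ in $S^{\pm1}$, the element $\varphi(u(S))$ is represented by $(u(x_{\cdot,i}))_{i\in I}$. Such an element is trivial exactly when $\{i : u(x_{\cdot,i}) = 1\} \in \mathcal{U}$.

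\textbf{Step 2: find one good coordinate.} Each relator $r \in R$ is trivial in $G$, so $\varphi(r)=1$ and $A_r := \{i : r(x_{\cdot,i})=1\}\in\mathcal{U}$. Since $\varphi$ is injective, $\varphi(g)\neq 1$. As $\mathcal{U}$ is an ultrafilter, the complement $B := \{i : w(x_{\cdot,i})\neq 1\}$ therefore lies in $\mathcal{U}$. This is the key point, and where finite presentability is essential: $R$ is finite, so $B\cap\bigcap_{r\in R}A_r$ is a \emph{finite} intersection of members of $\mathcal{U}$. It therefore lies in $\mathcal{U}$ and in particular is non-empty. For an infinitely presented group one would need infinitely many relators to hold simultaneously, and an ultrafilter gives no such guarantee. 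Pick an index $i_0$ in this intersection.

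\textbf{Step 3: build the homomorphism.} Define $\psi$ on the free group $F(S)$ by $s\mapsto x_{s,i_0}\in G_{i_0}$. Since $i_0 \in A_r$ for all $r\in R$, every relator maps to $1$, so $\psi$ descends to a homomorphism $\psi\colon G\to G_{i_0}$. Since $i_0\in B$, we have $\psi(g)=w(x_{\cdot,i_0})\neq 1$. The target may be taken to be $\psi(G)\le G_{i_0}$, which lies in $\mathcal{C}$ because $\mathcal{C}$ is closed under subgroups. Since $g\neq 1$ was arbitrary, $G$ is residually $\mathcal{C}$.

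I expect no serious obstacle. The only delicate points are the bookkeeping: the ultraproduct operations are coordinatewise on representatives, and finiteness of $R$ is used to intersect finitely many sets in $\mathcal{U}$. For trivial $G$ the statement is vacuous.
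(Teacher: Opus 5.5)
Your proof is correct. The paper gives no argument of its own here. It cites Theorem 7.2.5 and Corollary 7.1.21 of \cite{CSC}, which factor the statement through the notion of being \emph{locally embeddable} into $\mathcal{C}$: for every finite $K\subseteq G$ there is a map from $G$ to some group in $\mathcal{C}$ that is injective on $K$ and multiplicative on pairs from $K$. The first result converts an embedding into an ultraproduct into such local embeddings. The second uses a finite presentation to upgrade a local embedding on a large enough finite set to an honest homomorphism in which a prescribed element survives.

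Your choice of a single coordinate $i_0$ in the $\mathcal{U}$-large set $B\cap\bigcap_{r\in R}A_r$ collapses these two steps into one. The assignment $s\mapsto x_{s,i_0}$ already defines a homomorphism on $F(S)$, the relators die there, and $g$ survives, so von Dyck's theorem finishes the job.

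Your route is shorter and self-contained. It also makes visible that the only finiteness used is that of $R$: your argument never needs $S$ to be finite. The factored route costs more set-up but buys a framework that also gives the converse recorded in the paper's Remark. Namely, for classes closed under finite direct products, every residually $\mathcal{C}$ group embeds into an ultraproduct of groups from $\mathcal{C}$, via Corollary 7.1.14 together with the same Theorem 7.2.5.
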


\begin{remark}
    While the above implication holds for finitely presented groups only, the converse implication is true for any (non-necessarily finitely presented) group provided that the class $\mathcal{C}$ is closed under finite direct products (cf. \cite[Theorem 7.2.5 and Corollary 7.1.14]{CSC}).
\end{remark}

\begin{proof}[Proof of Theorem \ref{Theorem criterion for not in frakAt}]
	We proceed by contradiction. Assume that $G \in \frakAt$.
    Then there exists a minimal ordinal $\alpha$ such that $G \in \frakA_\alpha$. Since $G$ is not abelian, $\alpha > 0$.
	
	Assume first that $\alpha = \beta +1 $ is a successor ordinal. Then there are groups $N,Q \in \frakA_\beta$ such that
	\begin{equation*}
		1 \to N \to G \to Q \to 1
	\end{equation*}
	is a short exact sequence of groups. Then by assumption \ref{Item ass quotients} on $G$, either $Q$ is isomorphic to $G$ or $Q$ is abelian. The first case is not possible since this would mean $G\in \frakA_\beta$, contradicting the minimality of $\alpha$. So $Q$ must be abelian. This means that $N$ contains the commutator subgroup $G^\prime$, which itself contains a subgroup isomorphic to $G$ by property \ref{Item ass embed in comm subgroup}. Since $\frakA_\beta$ is a quasivariety and therefore closed under taking subgroups, we again obtain $G \in \frakA_\beta$, which contradicts the choice of $\alpha$.
    
	So $\alpha$ must be a limit ordinal. This means that $G$ embeds into an ultraproduct of groups from $\bigcup_{\beta < \alpha} \frakA_\beta$. This class is closed under taking subgroups since every $\frakA_\beta$ is. Since $G$ is finitely presented by assumption \ref{Item ass fp}, Theorem \ref{Theorem locally embed implies residually} implies that $G$ is residually $\bigcup_{\beta < \alpha} \frakA_\beta$.
    We show that this induces a contradiction.
    Let $g \in G^\prime \setminus \{1\}$ be a non-trivial element of the commutator subgroup of $G$, which exists by assumption \ref{Item ass not abelian}. Then there is an ordinal $\beta < \alpha$, a group $H \in \frakA_\beta$ and a surjective homomorphism $\varphi \colon G \twoheadrightarrow H$ such that $\varphi (g) \neq 1$. By assumption \ref{Item ass quotients}, either $\varphi$ is an isomorphism or $H$ is abelian. The first is not possible as it would mean $G \in \frakA_\beta$, contradicting the minimality of $\alpha$ as before. So $H$ is abelian, which contradicts the fact that $g \in G^\prime$ and $\varphi(g) \neq 1$.
	
    Therefore, the assumption was wrong and $G$ is not contained in the quasivariety $\frakAt$.
\end{proof}

	
	\bibliographystyle{alpha}
	\bibliography{references}
\end{document}

%% file: preamble.tex
\usepackage{amsmath, amssymb, amsfonts, amsthm}

\usepackage{mathtools}
\usepackage{scalerel} 
\usepackage{microtype}
\usepackage{enumitem}
\usepackage{hyperref}

\theoremstyle{plain}
\newtheorem{theorem}{Theorem}[section]
\newtheorem{lemma}[theorem]{Lemma}

\theoremstyle{definition}
\newtheorem{definition}[theorem]{Definition}

\theoremstyle{remark}
\newtheorem{remark}[theorem]{Remark}

\DeclareMathOperator{\Zen}{Z}

\newcommand{\Z}{\mathbb{Z}}

\newcommand{\frakA}{\mathfrak{A}}
\newcommand{\frakAt}{\tilde{\mathfrak{A}}}

%% file: references.bib
@book {BierStreb,
    AUTHOR = {Bieri, R. and Strebel, R.},
     TITLE = {On groups of {PL}-homeomorphisms of the real line},
    SERIES = {Mathematical Surveys and Monographs},
    VOLUME = {215},
 PUBLISHER = {American Mathematical Society, Providence, RI},
      YEAR = {2016},
     PAGES = {xvii+174},
      ISBN = {978-1-4704-2901-0},
   MRCLASS = {20F38 (20E32 20F05 20F28)},
       DOI = {10.1090/surv/215},
       URL = {https://doi.org/10.1090/surv/215},
}

@article{Brodskii,
    AUTHOR = {Brodski\u{\i}, S. D.},
     TITLE = {Equations over groups, and groups with one defining relation},
   JOURNAL = {Sibirsk. Mat. Zh.},
  FJOURNAL = {Akademiya Nauk SSSR. Sibirskoe Otdelenie. Sibirski\u i\
              Matematicheski\u i\ Zhurnal},
    VOLUME = {25},
      YEAR = {1984},
    NUMBER = {2},
     PAGES = {84--103},
      ISSN = {0037-4474},
   MRCLASS = {20E06 (20E25)},
  MRNUMBER = {741011},
MRREVIEWER = {James\ Howie},
}

@article{BrodskiiAnnouncement,
    AUTHOR = {Brodski\u{\i}, S. D.},
     TITLE = {Equations over groups and groups with one defining relation},
   JOURNAL = {Uspekhi Mat. Nauk},
  FJOURNAL = {Akademiya Nauk SSSR i Moskovskoe Matematicheskoe Obshchestvo.
              Uspekhi Matematicheskikh Nauk},
    VOLUME = {35},
      YEAR = {1980},
    NUMBER = {4(214)},
     PAGES = {183},
      ISSN = {0042-1316},
   MRCLASS = {20F05},
  MRNUMBER = {586195},
MRREVIEWER = {F.\ Haimo},
}

@article {CannFloyParr_INORTG,
    AUTHOR = {Cannon, J. W. and Floyd, W. J. and Parry, W. R.},
     TITLE = {Introductory notes on {R}ichard {T}hompson's groups},
   JOURNAL = {Enseign. Math. (2)},
  FJOURNAL = {L'Enseignement Math\'ematique. Revue Internationale. 2e
              S\'erie},
    VOLUME = {42},
      YEAR = {1996},
    NUMBER = {3-4},
     PAGES = {215--256},
      ISSN = {0013-8584},
   MRCLASS = {20F32 (57M07)},
MRREVIEWER = {Michael\ L.\ Mihalik},
}

@book {CSC,
    AUTHOR = {Ceccherini-Silberstein, T. and Coornaert, M.},
     TITLE = {Cellular automata and groups},
    SERIES = {Springer Monogr. Math.},
 PUBLISHER = {Springer-Verlag, Berlin},
      YEAR = {2010},
     PAGES = {xx+439},
      ISBN = {978-3-642-14033-4},
}

@article{Howie_ASPOATOB,
    AUTHOR = {Howie, J.},
     TITLE = {A short proof of a theorem of {B}rodski\u{\i}},
   JOURNAL = {Publ. Mat.},
  FJOURNAL = {Publicacions Matem\`atiques},
    VOLUME = {44},
      YEAR = {2000},
    NUMBER = {2},
     PAGES = {641--647},
      ISSN = {0214-1493,2014-4350},
   MRCLASS = {20F05 (57M07)},
       DOI = {10.5565/publmat\_44200\_13},
       URL = {https://doi.org/10.5565/publmat_44200_13},
}

@incollection {Howie_HTGO-RGT,
    AUTHOR = {Howie, J.},
     TITLE = {How to generalize one-relator group theory},
 BOOKTITLE = {Combinatorial group theory and topology ({A}lta, {U}tah,
              1984)},
    SERIES = {Ann. of Math. Stud.},
    VOLUME = {111},
     PAGES = {53--78},
 PUBLISHER = {Princeton Univ. Press, Princeton, NJ},
      YEAR = {1987},
      ISBN = {0-691-08409-2; 0-691-08410-6},
   MRCLASS = {20F05 (20-02 20E06)},
  MRNUMBER = {895609},
MRREVIEWER = {F.\ Levin},
}

@article{Howie_OLIG,
    AUTHOR = {Howie, J.},
     TITLE = {On locally indicable groups},
   JOURNAL = {Math. Z.},
  FJOURNAL = {Mathematische Zeitschrift},
    VOLUME = {180},
      YEAR = {1982},
    NUMBER = {4},
     PAGES = {445--461},
      ISSN = {0025-5874,1432-1823},
   MRCLASS = {20F05 (57M20)},
MRREVIEWER = {Johannes\ Huebschmann},
       DOI = {10.1007/BF01214717},
       URL = {https://doi.org/10.1007/BF01214717},
}
